\numberwithin{equation}{section}
\newtheorem{theorem}{Theorem}[section]
\newtheorem{corollary}[theorem]{Corollary}
\newtheorem{lemma}[theorem]{Lemma}
\newtheorem{example}{Example}
\newtheorem{definition}[theorem]{Definition}
\DeclareMathAlphabet{\mymathbb}{U}{BOONDOX-ds}{m}{n}
\begin{document}

\title[Unimodality of Bernoulli Random Quota Complexes]{Unimodality of the Expectation of Betti Numbers for Bernoulli Random Quota Complexes}

\author[E. Crossen Brown]{Erin Crossen Brown}
\address{E. Crossen Brown, Department of Mathematics, 
University of Rochester,
500 Joseph C. Wilson Blvd., Rochester, NY 14627}
\email{ecrossen@ur.rochester.edu}



\date{}


\begin{abstract}
We study certain random simplicial complexes, called random quota complexes. A quota complex on $N+1$ weighted vertices is constructed by adding an $n$-simplex if the sum of the weights of the vertices is below a given quota, $q$. In this paper, the weights of the vertices are chosen i.i.d. with a Bernoulli distribution. The main result of this paper is that the expectation of the $m^{\textrm{th}}$ Betti number, i.e., the dimension of the $m^{\textrm{th}}$ homology group, is unimodal in $m$. 

\noindent
{\it Keywords: Random graphs, random simplicial complexes, random quota complexes.

}

\noindent
2020 {\it Mathematics Subject Classification:} Primary: 05E45. Secondary: 05C80, 60C05.

\end{abstract}

\maketitle

\setcounter{tocdepth}{1}
\tableofcontents
\setcounter{tocdepth}{3}

\section{Introduction}

In this paper, we study the expected homology of a certain family of random quota complexes, where the vertices are assigned random weights by a Bernoulli process. Random quota complexes are a specific family of random simplicial complexes, which are the higher dimensional analogs of random graphs. 


Random graph theory has been studied since the 1940s and 1950s, beginning with some results of Paul Erd\H{o}s and Alfr\'{e}d R\'{e}nyi. They used a probabilistic method approach to prove the existence of graphs with certain properties that had been in question and established facts about Ramsey numbers \cite{bollobas}. Random graph theory has become a topic of major importance in Computer Science, where it is referred to as network theory. In network theory, random graphs are used to represent network flow, where each edge has a capacity and a direction of flow; the shortest path problem; circuit path analysis, where a node can fail randomly at any time; social networks; and more \cite{circuit, lusher, newman}. In physics, percolation theory is another area of study that uses random graph theory. Here, the question is, if liquid is poured through a porous material, will the liquid make it to the bottom? The holes in the material are the vertices of the graph and the paths that the liquid makes between the holes are the edges. This has applications in the study of the spread of epidemics, environment fragmentation and its effects on animal habitats, dental percolation, and more \cite{grimmett, roberts, vanderhofstad}. In all these different models, one can observe phase transitions in the different properties that the models possess, such as porous to not porous, connected to disconnected networks, etc., as parameters vary.  

A particular random graph of interest is the Erd\H{o}s-R\'{e}nyi model, a random graph $G(n,p)$ that has vertex set of size $n$, where every possible edge is placed in the graph independently with probability $p$. Erd\H{o}s and R\'{e}nyi showed in 1960 that with $\epsilon>0$ fixed, if $p\geq \frac{(1+\epsilon)\log n}{n}$, then $G$ is connected with high probability, and if $p\leq \frac{(1-\epsilon)\log n}{n}$, then $G$ is disconnected with high probability \cite{erdos1960}. 

More recently, many authors have begun to study random simplicial complexes, which are an extension of random graphs to higher dimensions. 
This survey \cite{kahle} by Kahle lists many authors' new findings in Random Simplicial Complexes, such as \cite{kahlemeckes}, \cite{babson}, and \cite{aronshtam}. Often, authors study phase transitions in the expectation of the features of these random simplicial complexes as a function of the parameters. In particular, in \cite{linial2006}, a phase transition in the first Betti number was shown for a particular family of 2-dimensional random simplicial complexes. For this extension, faces are added to the complete graph on $n$ vertices independently with probability $p$. It was shown that, for a function $\omega(n)$ that tends to infinity, if $p\leq \frac{2\log n - \omega(n)}{n}$, then the first 
Betti number is 0
with high probability, but if $p\geq \frac{2\log n + \omega(n)}{n}$, then the first Betti number is nonzero
with high probability. This was later extended further to random $k$-dimensional complexes in \cite{meshulam}. In general, the study of phase transitions in random simplicial complexes has generated a lot of interest. 

In this paper, a specific type of random simplicial complex, called a random quota complex, is studied. We assign each vertex a weight independently as  a Bernoulli random variable + 1 so that the possible weights are 1 or 2. We put in faces when the sum of the weights of the vertices is below quota. In section~\ref{sec: formula}, we give a formula for the expectation of the reduced $m^{th}$ Betti number, $\mathbb{E}[\widetilde{\beta}_{m}]$.
We next show that $\mathbb{E}[\widetilde{\beta}_{m}]$ is unimodal in $m$ under certain restrictions, as in the following theorem [for definitions, see section~\ref{sec: def}]: 

\begin{theorem}
\label{thm: unimodal}
Fix $d>1/2$, where $N+1=dq$ and consider a random Bernoulli quota complex with $N+1$ vertices, quota $q$, and Bernoulli probability $p$.
Then $\mathbb{E}[\widetilde{\beta}_{m}]$ is unimodal in $m$. 
\end{theorem}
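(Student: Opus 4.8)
The plan is to start from the formula of Section~\ref{sec: formula}, which expresses $\mathbb{E}[\widetilde\beta_m]$ as the average, over the number $j$ of weight-$1$ vertices, of the reduced Betti numbers of the deterministic quota complex $\Delta_j$ with exactly $j$ weight-$1$ and $N+1-j$ weight-$2$ vertices:
\[
\mathbb{E}[\widetilde\beta_m]=\sum_{j=0}^{N+1}\binom{N+1}{j}(1-p)^{j}p^{N+1-j}\,\widetilde\beta_m(\Delta_j).
\]
The input I would use here is that $\Delta_j$ is a shifted complex (order the vertices lightest first), hence a wedge of spheres, with $\widetilde\beta_m(\Delta_j)=\binom{j-1}{2m+3-q}\binom{N+1-j}{q-m-2}$ for $j\ge1$, while the all-weight-$2$ complex $\Delta_0$ contributes a single skeleton term in dimension $\lfloor(q-1)/2\rfloor-1$. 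The first, easy observation is that for each fixed $j$ the sequence $m\mapsto\widetilde\beta_m(\Delta_j)$ is log-concave with no internal zeros: it is the product of $\binom{N+1-j}{q-m-2}$ (a binomial coefficient read in reverse) and $\binom{j-1}{2m+3-q}$, which is log-concave in $m$ because any arithmetic-progression subsequence of a log-concave sequence without internal zeros is again log-concave. Thus each summand is unimodal in $m$, and inspection of where the two binomial factors peak shows that the mode of the $j$-th summand is nondecreasing in $j$.

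The difficulty is that a nonnegative combination of unimodal — even log-concave — sequences with monotonically ordered modes need not be unimodal, so there is no purely formal way to conclude; I would therefore pass to an explicit expression. Writing $\binom{j-1}{2m+3-q}\binom{N+1-j}{q-m-2}$ as the coefficient of $z^{q-1}t^{m+1}$ in $(1+zt)^{j-1}(1+z^2t)^{N+1-j}$ and summing the binomial series gives, for the $j\ge1$ part,
\[
\sum_{j\ge1}\binom{N+1}{j}(1-p)^{j}p^{N+1-j}\,\widetilde\beta_m(\Delta_j)=[z^{q-1}t^{m+1}]\,\frac{\bigl(1+zt\,((1-p)+pz)\bigr)^{N+1}-p^{N+1}(1+z^2t)^{N+1}}{1+zt}.
\]
Expanding $1/(1+zt)$ as a geometric series and collapsing the inner sum with $\binom{N+1}{k}\binom{k}{b}=\binom{N+1}{b}\binom{N+1-b}{k-b}$ yields a closed form of the shape $\mathbb{E}[\widetilde\beta_m]=(-1)^{q+1}\binom{N+1}{q-m-2}\bigl[p^{\,q-m-2}S_m-p^{N+1}\bigr]+(\text{the }\Delta_0\text{ term})$, where $S_m=\sum_{s=0}^{2m+3-q}(-1)^s\binom{N+3-q+m}{s}(1-p)^s$ is a truncated alternating sum. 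From this one proves unimodality directly — e.g.\ by showing that the successive differences $\mathbb{E}[\widetilde\beta_{m+1}]-\mathbb{E}[\widetilde\beta_m]$ change sign at most once, equivalently that once negative they stay negative. The prefactor $\binom{N+1}{q-m-2}$ is itself unimodal in $m$, so the work is to control the factor $p^{\,q-m-2}S_m-p^{N+1}$ by analyzing $S_{m+1}$ versus $S_m$, and then to check that multiplying the two introduces no second turning point on the range where $\mathbb{E}[\widetilde\beta_m]\neq0$. The hypothesis $d>1/2$, i.e.\ $N+1>q/2$, is what makes that range a genuine interval and, in particular, keeps the anomalous $\Delta_0$ contribution at $m=\lfloor(q-1)/2\rfloor-1$ from rising above the surrounding terms and producing a spurious second local maximum.

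I expect the main obstacle to be precisely this passage from ``each summand is unimodal'' to ``the sum is unimodal.'' Since it has no shortcut (total positivity of the kernel $\binom{j-1}{2m+3-q}\binom{N+1-j}{q-m-2}$, which does hold, is not enough to force log-concavity of the mixture), the proof must commit to the explicit coefficient extraction and then run a somewhat delicate monotonicity analysis of the resulting product; the most error-prone part will be the bookkeeping at the two ends of the support of the sequence, and in particular reconciling the genuine $\Delta_0$ (all-weight-$2$) contribution with what the generic $j\ge1$ formula would otherwise predict there.
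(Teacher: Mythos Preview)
Your proposal rests on a misreading of Section~\ref{sec: formula}. Lemma~\ref{lemma: exval} does not present a mixture over the number of weight-$1$ vertices; it gives a single closed-form product
\[
\mathbb{E}[\widetilde\beta_m]=\binom{N}{m+1}\binom{m+1}{q-m-2}p^{q-m-2}\,\overline p^{\,2m+3-q}
=\frac{N!}{(N-m-1)!\,(q-m-2)!\,(2m+3-q)!}\,p^{q-m-2}\,\overline p^{\,2m+3-q},
\]
obtained directly by linearity of expectation (each of the $\binom{N}{m+1}$ subsets of $\{v_1,\dots,v_N\}$ has the same binomial probability of hitting weight $q-1$). Your mixture formula is also slightly off for the paper's model, since $v_0$ has weight~$1$ deterministically and only $v_1,\dots,v_N$ are random; but even the corrected mixture collapses, via the standard identity $\binom{N}{k}\binom{k}{a}=\binom{N}{a}\binom{N-a}{k-a}$, to exactly the closed form above.

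Once you have that closed form, the paper's argument is short and elementary, and avoids entirely the obstacle you identify. One computes the ratio $M^1(m)=\mathbb{E}[\widetilde\beta_m]/\mathbb{E}[\widetilde\beta_{m-1}]$ and then the second forward quotient
\[
M^2(m)=\frac{M^1(m)}{M^1(m-1)}
=\frac{2m-q}{2m-q+2}\cdot\frac{2m-q+1}{2m-q+3}\cdot\frac{N-m}{N-m+1}\cdot\frac{q-m-1}{q-m},
\]
a product of four fractions each with denominator strictly larger than its numerator, hence $M^2(m)<1$ throughout the support. This is exactly log-concavity of $\{\mathbb{E}[\widetilde\beta_m]\}$; unimodality follows after checking that $M^1$ exceeds $1$ at the left end of the support and is below $1$ at the right end (this is where $d>1/2$ and taking $q$ large enter).

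Your route, by contrast, first disassembles the closed form into a mixture of log-concave sequences and then tries to reassemble it via coefficient extraction and an analysis of truncated alternating sums. You correctly note that mixtures of log-concave sequences need not be log-concave, but that difficulty is entirely self-inflicted here: it arises only because you discarded the product formula. As written, your proposal never actually closes the gap you flag; the phrases ``from this one proves unimodality directly'' and ``the work is to control the factor $p^{q-m-2}S_m-p^{N+1}$'' are placeholders, not arguments. So the proposal is not a proof, and the intended approach is substantially harder than the paper's two-line ratio computation.
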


Theorem~\ref{thm: unimodal} will be proved in section~\ref{sec: proof_unimodal}. We also show that the peak is unique and give equations to find the peak in section~\ref{sec: peak}. Lastly in section~\ref{sec: bounds}, using bounds on the Betti numbers, we determine the fate of the Betti numbers to establish regimes where they grow or die off.


\section{Definitions and preliminary results}
\label{sec: def}

We direct the reader to Munkres' Elements of Algebraic Topology \cite{munkres} for basic definitions concerning simplicial complexes and homology. The following definitions are also used throughout this paper. 

Associated to a topological space, there are numbers which measure the various dimensional holes in the space, which are called Betti numbers. These are defined below. 


\begin{definition}
(Betti number) Using coefficients in $\mathbb{R}$, the $m^{\textrm{th}}$ Betti number of the complex $X$, $\beta_{m}$, is the dimension of the $m^{\textrm{th}}$ homology group of $X$, i.e., $\beta_{m}=dim(H_m(X,\mathbb{R}))$. 
The reduced $m^{\textrm{th}}$ Betti number of the complex $X$, $\widetilde{\beta}_{m}$, is the dimension of the reduced $m^{\textrm{th}}$ homology group of $X$, i.e., $\widetilde{\beta}_{m}=\textrm{dim}(\widetilde{H_m}(X))$. 

In general, $\widetilde{\beta}_{m}=\beta_{m}$ for all $m\geq1$, and $\widetilde{\beta}_{0}=\beta_0-1$. 
\end{definition} 

From now on, all references to Betti numbers will mean reduced Betti numbers.

\begin{definition}
(Scalar-valued quota complex). Let $V$ be a vertex set. A scalar-valued quota system on $V$ is given by a weight function $w:V\rightarrow \mathbb{R}_+$ and quota $q>0$. The quota complex $X[w:q]$ is the simplicial complex on the vertex set $V$ such that a face $F=[v_0,...,v_n]$ is in $X[w:q]$ if and only if $w(F)\coloneqq\sum_{i=0}^{n}w(v_i)<q$ \cite{PW}.
\end{definition}


\begin{example}
For example, consider a system of four vertices with weights $w(v_0)=1$, $w(v_1)=3$, $w(v_2)=4$, and $w(v_3)=7$, and let the quota $q=10$. Then the edges $[v_0, v_1]$, $[v_0, v_2]$, $[v_0, v_3]$, and $[v_1, v_2]$ are all included in the complex, though the edges $[v_1, v_3]$ and $[v_2, v_3]$ are not, because $3+7=10\geq10$ and $4+7=11\geq10$. Also, the triangle $[v_0, v_1, v_2]$ is included, but the triangle $[v_0, v_1, v_3]$ is not because $1+3+7=11\geq10$. If the quota was raised to $q=12$, then the edges $[v_1, v_3]$ and $[v_2, v_3]$ and the triangle $[v_0, v_1, v_3]$ would now also be included in addition to what was previously included, though the triangles $[v_0, v_2, v_3]$ and $[v_1, v_2, v_3]$ would still not be included. Note that the 3-simplex $[v_0,v_1,v_2,v_3]$ would not be included in this complex until the quota is raised to be strictly greater than 15. 
\end{example}

We use random variants of quota complexes throughout this paper, so we will define those next. 

\begin{definition}
(Random quota complex). Fix $X_0=m>0$ as a nonrandom value. Let $X_1$, ..., $X_N$ be independent random variables. Fix a quota $q>m$. Then, let $X=\{X_0=m, X_1, ..., X_N\}$. $X[q]$ is the quota complex on vertices $\{0,1,2,...,N\}$ with weights $w(i)=X_i$ and quota $q$.  

$X[q]$ is called a random scalar quota complex. Given a realization of the random weights $X_1,\dots,X_N$,
$X[q]$ will determine a specific scalar quota complex. Thus $X[q]$ can be viewed as a random variable 
which takes values in the set of finite abstract simplicial complexes  \cite{PW}. 
\end{definition}

In \cite{PW}, the following theorem regarding the relationship between the $m^{\textrm{th}}$ Betti number of a quota complex and the quota itself was proven. 

\begin{theorem}
\label{thm: Jon}
Let $X=X[w : q]$ be a scalar valued quota complex, then $X$ is homotopy equivalent to a bouquet of spheres. Let $v_0$ be a vertex of minimum weight. Then there is one sphere of dimension $s$ in the bouquet for every face $F$ of dimension $s$ in $X$, not containing $v_0$, such that $q-w(v_0)\leq w(F)< q$, where $w(v_0)$ is the smallest weight. 

\end{theorem}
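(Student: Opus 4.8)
The plan is to realize $X$ explicitly as the homotopy cofiber of a contractible subcomplex and then to read the wedge of spheres directly off the combinatorics of the weight function. Throughout, write $s=\dim F=|F|-1$ for a face $F$, and note first that $X$ really is a simplicial complex: since all weights are positive, $w(F)<q$ and $F'\subseteq F$ force $w(F')\le w(F)<q$, so $F'\in X$, i.e. $X$ is downward closed.

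First I would fix a vertex $v_0$ of minimum weight and single out the closed star $\operatorname{st}(v_0)=\{H\in X:\ H\cup\{v_0\}\in X\}$ together with the link $\operatorname{lk}(v_0)=\{G\in X:\ v_0\notin G,\ G\cup\{v_0\}\in X\}$. One checks $\operatorname{st}(v_0)$ is a subcomplex and that $\operatorname{st}(v_0)=\{v_0\}\ast\operatorname{lk}(v_0)$ is a cone with apex $v_0$, hence contractible. Since $(X,\operatorname{st}(v_0))$ is a simplicial pair the inclusion is a cofibration, so the quotient map is a homotopy equivalence $X\simeq X/\operatorname{st}(v_0)$ (see, e.g., \cite{munkres}). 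The cells of $X/\operatorname{st}(v_0)$ other than the basepoint are exactly the faces of $X$ lying outside $\operatorname{st}(v_0)$, namely those $F$ with $v_0\notin F$ and $F\cup\{v_0\}\notin X$. As $F\cup\{v_0\}\notin X$ means $w(F)+w(v_0)\ge q$ while $F\in X$ means $w(F)<q$, these are precisely the faces in the statement: $v_0\notin F$ and $q-w(v_0)\le w(F)<q$. I will call them \emph{critical}.

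The crux is to show that collapsing $\operatorname{st}(v_0)$ turns each critical face into a sphere wedged at the basepoint, and this is exactly where minimality of $w(v_0)$ is used. I would prove the key claim: for a critical face $F$ and any vertex $v\in F$, the facet $F\setminus\{v\}$ lies in $\operatorname{lk}(v_0)$. Indeed $F\setminus\{v\}\in\operatorname{lk}(v_0)$ iff $w(F\setminus\{v\})+w(v_0)<q$, i.e. $w(v)>w(F)+w(v_0)-q$; and since $w(F)<q$ we have $w(F)+w(v_0)-q<w(v_0)\le w(v)$, so the inequality holds. Consequently every proper face of a critical $F$ lies in $\operatorname{lk}(v_0)\subseteq\operatorname{st}(v_0)$, so the entire boundary $\partial F$ is collapsed to the basepoint in $X/\operatorname{st}(v_0)$; in particular no critical face is a proper face of another, so the open critical cells are pairwise disjoint and each is attached solely along the basepoint. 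Therefore $X/\operatorname{st}(v_0)=\bigvee_{F\ \mathrm{critical}}S^{\dim F}$, with exactly one sphere of dimension $s$ for each critical $s$-face, and $X\simeq X/\operatorname{st}(v_0)$ yields the theorem.

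The main obstacle is producing the wedge structure rather than merely the Betti numbers. Discrete Morse theory applied to the element matching $F\leftrightarrow F\cup\{v_0\}$ (which is acyclic) already identifies the critical faces and hence computes the homology, but it does not by itself give the homotopy type of a wedge of spheres. The geometric content that upgrades it is precisely the facet computation above—equivalently, the observation that the boundary sphere $\partial(F\cup\{v_0\})$ of the missing simplex sits entirely inside $X$—which forces every attaching map to be null and kills all Morse differentials. I would therefore spend most of the effort making that inequality airtight, including the degenerate cases where the minimum-weight vertex is not unique (any such $v_0$ works) and where $X$ is contractible (no critical faces, the empty wedge).
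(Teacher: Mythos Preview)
The paper does not prove this theorem; it is quoted verbatim as a known result from Pakianathan--Winfrey \cite{PW} and used as a black box. So there is no ``paper's own proof'' to compare against.

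That said, your argument is correct and essentially the standard one. The identification of the closed star $\operatorname{st}(v_0)$ as a cone, the cofibration collapse $X\simeq X/\operatorname{st}(v_0)$, and the characterization of the surviving (critical) faces are all routine. The only place where the hypothesis that $v_0$ has \emph{minimum} weight enters is exactly where you flag it: the inequality $w(F)-w(v)+w(v_0)\le w(F)<q$ showing that every facet of a critical face lies in the link. That step is clean and the conclusion that each critical $s$-cell has its whole boundary collapsed to the basepoint---hence contributes an $S^s$ summand---follows. Your remark that this forces no critical face to be a proper face of another is the right observation to ensure the wedge decomposition is honest, not just a homology count.

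Two small cosmetic points. First, the Morse-theoretic aside at the end is accurate but unnecessary: your direct quotient argument already delivers the homotopy type, so you need not invoke acyclic matchings at all. Second, the reference to \cite{munkres} for ``contractible subcomplex $\Rightarrow$ quotient is a homotopy equivalence'' is slightly off---Munkres does CW pairs and excision but not that statement in that form; a pointer to Hatcher (Proposition 0.17) or the general cofibration fact would be cleaner.
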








\subsection{The Bernoulli quota complex model}
Let $X_1, \dots, X_N$ be independent, identically distributed, Bernoulli random variables with probability $p$ to have value 1 and probability $\overline{p}\coloneqq(1-p)$ to have value 0 (note that $\overline{p}$ will be used as $(1-p)$ throughout this paper). 

Generate a random quota system, $X$ with quota $q$ on $N+1$ vertices $v_0,\dots,v_N$ with weights $W_0,...,W_N$, respectively, where 
\begin{align*}
    W_0 &= 1 &\textrm{is fixed,} \\
    W_j &= 1 + X_j,  &j=1,\dots,N.
\end{align*}

This means that the possible weights are 1 or 2. 

According to Theorem~\ref{thm: Jon}, the reduced $m^{\textrm{th}}$ Betti number of this complex, $\widetilde{\beta}_{m}$ is the number of $m+1$ sets of vertices, which do not include $v_0$, whose total weight lies in the range

\[
[q-W_0,q)=[q-1,q).
\]

We will consider when the quota is an integer, so in this case, we are looking for sets of vertices whose weights add to exactly $q-1$, since the weights we consider are integers.


\section{Expectation of the $m^{\textrm{th}}$ Betti number}
\label{sec: formula}

In this section, we find the formula for the expectation of the $m^\textrm{th}$ Betti number. This will be used throughout the rest of the paper.

\begin{lemma}
\label{lemma: exval}
If $\frac{q-1}{2}\leq m+1<q$, then  $$\mathbb{E}[\widetilde{\beta}_{m}]=\binom{N}{m+1}\binom{m+1}{q-m-2}p^{q-m-2}\;\overline{p}^{2m+3-q}.$$
If otherwise, then $\widetilde{\beta}_{m}=0$.  
\end{lemma}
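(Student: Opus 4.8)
The plan is to count, for each admissible $m$, the expected number of $(m+1)$-element vertex sets $S \subseteq \{v_1,\dots,v_N\}$ (excluding $v_0$) whose total weight equals $q-1$, as dictated by Theorem~\ref{thm: Jon} and the discussion following the Bernoulli model setup. Since each $v_j$ for $j \geq 1$ has weight $W_j = 1 + X_j$ with $X_j \sim \mathrm{Bernoulli}(p)$, a set $S$ of size $m+1$ has weight $w(S) = (m+1) + \sum_{j \in S} X_j$. Writing $k = \sum_{j\in S} X_j$ for the number of weight-$2$ vertices in $S$, the condition $w(S) = q-1$ forces $k = q - m - 2$. First I would note the range constraints: we need $0 \le k \le m+1$, i.e. $0 \le q-m-2$ and $q-m-2 \le m+1$, which rearrange to $m+1 < q$ (using integrality, $q - m - 2 \geq 0$ means $m+1 \leq q-1 < q$) and $m+1 \geq (q-1)/2$. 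Outside this range no such set exists, so $\widetilde\beta_m = 0$ deterministically, giving the second case.

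Next, inside the admissible range, I would compute the expectation via indicator variables. For a fixed $(m+1)$-subset $S$, let $\mathbf{1}_S$ be the indicator that $w(S) = q-1$, equivalently that exactly $q-m-2$ of the $X_j$, $j \in S$, equal $1$. Since the $X_j$ are i.i.d. Bernoulli$(p)$,
\[
\mathbb{P}(\mathbf{1}_S = 1) = \binom{m+1}{q-m-2} p^{q-m-2}\,\overline{p}^{\,(m+1)-(q-m-2)} = \binom{m+1}{q-m-2} p^{q-m-2}\,\overline{p}^{\,2m+3-q}.
\]
By Theorem~\ref{thm: Jon}, $\widetilde\beta_m = \sum_{S} \mathbf{1}_S$ where the sum runs over all $\binom{N}{m+1}$ subsets $S$ of $\{v_1,\dots,v_N\}$ of size $m+1$. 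Then linearity of expectation gives
\[
\mathbb{E}[\widetilde\beta_m] = \binom{N}{m+1}\binom{m+1}{q-m-2} p^{q-m-2}\,\overline{p}^{\,2m+3-q},
\]
which is exactly the claimed formula.

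The only subtle points, rather than genuine obstacles, are bookkeeping ones: first, checking carefully that Theorem~\ref{thm: Jon} really does reduce the Betti-number count to counting $(m+1)$-element vertex sets avoiding $v_0$ with weight in $[q-1,q)$, and that integrality of the weights collapses this interval to the single value $q-1$ — this is handled by the remarks already made in the Bernoulli model subsection. Second, one must confirm that the exponent arithmetic $2m+3-q = (m+1) - (q-m-2)$ is correct and that the binomial coefficients are interpreted as zero outside their natural range, so that the stated hypothesis $\tfrac{q-1}{2} \le m+1 < q$ is exactly the condition under which the formula is non-vacuous. I expect no real difficulty beyond this careful translation between the topological statement and the combinatorial count.
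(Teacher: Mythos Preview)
Your proposal is correct and follows essentially the same route as the paper: both reduce via Theorem~\ref{thm: Jon} to counting $(m{+}1)$-subsets of $\{v_1,\dots,v_N\}$ with weight exactly $q-1$, translate this to requiring $q-m-2$ successes among $m+1$ Bernoulli trials, and multiply the resulting binomial probability by $\binom{N}{m+1}$. Your use of indicator variables and linearity of expectation is slightly more explicit than the paper's phrasing, but the argument is the same.
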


\medskip

\begin{proof}

The $m^{\textrm{th}}$ Betti number is the number of sets of $m+1$ vertices whose weights add to be within the range $[q-1,q)$. Since the only possible weight values are $1$ and $2$, the sum of the weights of $m+1$ vertices is in the range $[m+1,2(m+1)]$. Therefore, there is no possibility of having the sum of the weights of a set of $m+1$ vertices adding to be within the range $[q-1,q)$ if $q-1>2(m+1)$ or if $q\leq m+1$. Thus, $\widetilde{\beta}_{m}=0$ outside of $m+1<q\leq2(m+1)+1$, i.e. outside of $\frac{q-1}{2}\leq m+1<q$, and so we assume that $m$ is in this range for the rest of the proof. 


\begin{align*}
    \mathbb{P}&[W_{j_1}+\dots+W_{j_{m+1}}\in[q-1,q)]=\mathbb{P}[(X_{j_1}+1)+...+(X_{j_{m+1}}+1)\in[q-1,q)] \\
    &=\mathbb{P}[X_{j_1}+...+X_{j_{m+1}}\in[q-1-(m+1),q-(m+1))] \\
    &= \mathbb{P}[X_{j_1}+...+X_{j_{m+1}}=q-1-(m+1)],
\end{align*}
where the last equation holds since $X_{j_1}+...+X_{j_{m+1}}$ is an integer. 






\medskip

\noindent Since the sum of i.i.d. Bernoulli random variables is binomial, 
\begin{align*}
    \mathbb{P}&[q-m-2\textrm{ successes out of }m+1\textrm{ trials}]=\binom{m+1}{q-m-2}p^{q-m-2}\;\overline{p}^{m+1-(q-m-2)} \\
    &=\binom{m+1}{q-m-2}p^{q-m-2}\;\overline{p}^{2m+3-q}.
\end{align*}

\medskip


\noindent Therefore, the expected value of the $m^{\textrm{th}}$ Betti number is:  $$\mathbb{E}[\widetilde{\beta}_{m}]=\binom{N}{m+1}\binom{m+1}{q-m-2}p^{q-m-2}\;\overline{p}^{2m+3-q}.$$


\end{proof}

\section{Unimodality of the expectation of the $m^{\textrm{th}}$ Betti number}
\label{sec: proof_unimodal}

In this section, we prove Theorem~\ref{thm: unimodal}. In order to do this, we must show that the sequence of numbers $\{\mathbb{E}[\widetilde{\beta}_{m}]\}_{m=0}^\infty$ is unimodal in $m$. Recall that we let $d>1/2$ and $N+1=dq$.

In order for a sequence of nonnegative real numbers $\{f_n\}_{n=0}^\infty$ to be uniquely unimodal, the sequence must go from increasing to decreasing at some unique point. Define the first forward quotient $M^1(f_n)=f_{n}/f_{n-1}$ and the second forward quotient $M^2(f_n)=M^1(f_{n})/M^1(f_{n-1})$. Then the sequence $\{f_n\}_{n=1}^\infty$ is uniquely unimodal if $M^2(f_n)>1$ for all $n\geq 2$, and $M^1(f_{n_1})>1$ and $M^1(f_{n_2})<1$ for some $1\leq n_1<n_2$.


\medskip
\begin{proof}[Proof of Theorem~\ref{thm: unimodal}]

First, note that a more useful version of the expected value in Lemma~\ref{lemma: exval} is: 

\begin{align*}
    \mathbb{E}[\widetilde{\beta}_{m}]&=\binom{N}{m+1}\binom{m+1}{q-m-2}p^{q-m-2}\;\overline{p}^{2m+3-q} \\
    &=\dfrac{N!}{(N-m-1)!(q-m-2)!(2m+3-q)!}p^{q-m-2}\;\overline{p}^{2m+3-q}. 
\end{align*}


        
        
    
    
        
        





Next, we will find and simplify the forward quotient, $M^1(\mathbb{E}[\widetilde{\beta}_{m}])$:

\begin{align*}
    M^1(&\mathbb{E}[\widetilde{\beta}_{m}])=\frac{\mathbb{E}[\widetilde{\beta}_{m}]}{\mathbb{E}[\widetilde{\beta}_{m-1}]} \\
    &=\frac{\left[N!p^{q-m-2}\;\overline{p}^{2m+3-q}\right]\left[(N-(m-1)-1)!(q-(m-1)-2)!(2(m-1)+3-q)!\right]}{\left[(N-m-1)!(q-m-2)!(2m+3-q)!\right]\left[N!p^{q-(m-1)-2}\;\overline{p}^{2(m-1)+3-q}\right]} \\
    &=\frac{\overline{p}^2}{p}\dfrac{(N-m)(q-m-1)}{(2m+3-q)(2m+2-q)}.
\end{align*}

\bigskip

Next, we find the second forward quotient, $M^2(\mathbb{E}[\widetilde{\beta}_{m}])$: 

\begin{align*}
    M^2(\mathbb{E}[\widetilde{\beta}_{m}])&=\dfrac{\mathbb{E}[\widetilde{\beta}_{m}]/\mathbb{E}[\widetilde{\beta}_{m-1}]}{\mathbb{E}[\widetilde{\beta}_{m-1}]/\mathbb{E}[\widetilde{\beta}_{m-2}]} \\
    &=\dfrac{\overline{p}^2(N-m)(q-m-1)}{p(2m+3-q)(2m+2-q)} \dfrac{p(2(m-1)+3-q)(2(m-1)+2-q)}{\overline{p}^2(N-(m-1))(q-(m-1)-1)} \\
    &=\dfrac{(2m-q)}{(2m-q+2)}\dfrac{(2m-q+1)}{(2m-q+3)} \dfrac{(N-m)}{(N-m+1)}\dfrac{(q-m-1)}{(q-m)}.
\end{align*}

Considering each term in the final expression, it is easy to see that each denominator is at least one greater than its numerator. Therefore, this term must be less than 1 for all $m\geq2$, which implies that the second forward quotient is always less than 1. 





\medskip

Next, we show that the first forward quotient, $M^1(\mathbb{E}[\widetilde{\beta}_{m}])$, is equal to 1 at some point. To do this, we will look at the bounds for $m$ and consider the smallest and largest values that $m$ can take. The smallest value will make $M^1(\mathbb{E}[\widetilde{\beta}_{m}])>1$ and the largest will make $M^1(\mathbb{E}[\widetilde{\beta}_{m}])<1$.

From before, we know that $\mathbb{E}[\widetilde{\beta}_{m}]=0$ unless $\dfrac{q-1}{2}\leq m+1<q$. Also, since we have at most $N$ vertices that are not $v_0$, $m+1\leq N$. 

\underline{Smallest $m$}: For the first forward quotient, the smallest $m$ we can consider is $\dfrac{q+1}{2}$ (assume $q\geq1$ for $M^1(\mathbb{E}[\widetilde{\beta}_{m}])$ to be defined): 
\begin{align*}
    M^1(\mathbb{E}[\widetilde{\beta}_{\frac{q+1}{2}}])&= \frac{\overline{p}^2}{p}\frac{\left(N-\left(\frac{q+1}{2}\right)\right)\left(q-\left(\frac{q+1}{2}\right)-1\right)}{(2\left(\frac{q+1}{2}\right)+3-q)(2\left(\frac{q+1}{2}\right)+2-q)} \\
    &= \frac{\overline{p}^2}{p}\frac{\left(N-\frac{q}{2}-\frac{1}{2}\right)\left(\frac{q}{2}-\frac{3}{2}\right)}{(4)(3)}.
\end{align*}

Using the relation $N+1=dq$, we get:
  
\[
    M^1(\mathbb{E}[\widetilde{\beta}_{\frac{q+1}{2}}])=\frac{\overline{p}^2}{24p}\left(\left(d-\frac{1}{2}\right)q-\frac{3}{2}\right)\left(q-3\right).
\]

To show that this final expression is greater than 1 for sufficiently large $q$, consider term-by-term. The first term, $\dfrac{\overline{p}^2}{24p}$, is a constant with respect to $q$. Therefore, we can choose a large enough $q$ so that this term doesn't matter. The second term, $\left(q\left(d-\dfrac{1}{2}\right)-1\right)$ is greater than 1 if $q>\dfrac{5}{2d-1}$. The last term, $\left(q-3\right)$, is greater than 1 if $q>4$. 

Therefore, choose $q>\max\left\{4,\dfrac{5}{2d-1}\right\}$, or larger if necessary. Then $M^1(\mathbb{E}[\widetilde{\beta}_{m}])>1$ for the smallest possible value of $m$.

\underline{Largest $m$}: We actually have two different upper bounds for $m$, which are $m+1<q$ and $m+1\leq N$. We will consider each upper bound separately. 

First, consider the upper bound $m+1<q$. Assume that $q\in\mathbb{Z}$, so that our upper bound becomes $m=q-2$. Note that by choosing this upper bound, we must have that $q-2\leq N-1$. In the relation $N+1=dq$, this implies that $d\geq1$.

\begin{align*}
    M^1(\mathbb{E}[\widetilde{\beta}_{q-2}])&= \dfrac{\overline{p}^2}{p}\dfrac{(N-(q-2))(q-(q-2)-1)}{(2(q-2)+3-q)(2(q-2)+2-q)} \\
    &= \dfrac{\overline{p}^2}{p}\dfrac{(N-q+2)(1)}{(q-1)(q-2)}. 
\end{align*}

Using the relation $N+1=dq$, we get:
    
\[
    M^1(\mathbb{E}[\widetilde{\beta}_{q-2}])= \dfrac{\overline{p}^2}{p}\dfrac{(q(d-1)+1)}{(q-1)(q-2)}.
\]

As $q\rightarrow\infty$, this is certainly less than 1.


\medskip

Next, consider the upper bound $m=N-1$, which implies that $N-1\leq q-2$. This implies that $d\leq 1$. We have

\begin{align*}
    M^1(\mathbb{E}[\widetilde{\beta}_{N-1}])&= \dfrac{\overline{p}^2}{p}\dfrac{(N-(N-1))(q-(N-1)-1)}{(2(N-1)+3-q)(2(N-1)+2-q)} \\
    &= \dfrac{\overline{p}^2}{p}\dfrac{(1)(q-N)}{(2N-q+1)(2N-q)}.
\end{align*}

Using the relation $N+1=dq$, we get:
  
\[
     M^1(\mathbb{E}[\widetilde{\beta}_{N-1}])=\dfrac{\overline{p}^2}{p}\dfrac{(q(1-d)+1)}{(q(2d-1)-1)(q(2d-1)-2)}. 
\]

As $q\rightarrow\infty$, this is certainly less than 1.

Therefore, for the lower bound for $m$, the first forward quotient is greater than 1, and for the upper bounds for $m$, the first forward quotient is less than 1. Since the second forward quotient is always less than 1, the first forward quotient must be strictly decreasing, so it must at some point equal 1. 



\end{proof}

\section{Determining $m_{\textrm{peak}}$}
\label{sec: peak}

Now, we would like to find the location of the mode, $m_{\textrm{peak}}$, of the expectation of the $m^{\textrm{th}}$ Betti number in terms of the parameters of the model. To further this, we introduce $\tau=\frac{m+1}{q}$, the comparable location of the dimension $m$ to the quota $q$. Then $\tau_{\textrm{peak}}=\frac{m_{\textrm{peak}}+1}{q}$, and as we consider asymptotically large quotas, we will look at
$$\tau_\infty=\tau_{\textrm{peak}}^\infty=\lim_{q\rightarrow\infty}\tau_{\textrm{peak}}.$$

\begin{theorem}
\label{thm: tinf}


If $p\neq3-\sqrt{8}$, then the mode of $\mathbb{E}[\widetilde{\beta}_{m_{\textrm{peak}}}]$ is achieved when $\frac{1}{2}\leq\tau_{\infty}<1$ solves the quadratic equation: 
\begin{equation*}
    \tau_{\infty}^2(\overline{p}^2-4p) + \tau_{\infty}\left(4p-\overline{p}^2(d+1)\right) +\overline{p}^2d-p=0.
\end{equation*}

If $p=3-\sqrt{8}$, then the mode is achieved when $\tau_{\infty}=\dfrac{4d-1}{4d}$.
\end{theorem}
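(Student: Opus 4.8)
The plan is to read off the location of $m_{\textrm{peak}}$ from the first forward quotient already computed in the proof of Theorem~\ref{thm: unimodal},
\[
M^1(\mathbb{E}[\widetilde{\beta}_{m}])=\frac{\overline{p}^2}{p}\,\frac{(N-m)(q-m-1)}{(2m+3-q)(2m+2-q)},
\]
and then let $q\to\infty$. Since $M^2(\mathbb{E}[\widetilde{\beta}_{m}])<1$ everywhere, the sequence $m\mapsto M^1(\mathbb{E}[\widetilde{\beta}_{m}])$ is strictly decreasing on the range of $m$ where consecutive Betti numbers are positive, and the estimates in the proof of Theorem~\ref{thm: unimodal} show that for large $q$ this quotient still exceeds $1$ near the bottom of the support, so $m_{\textrm{peak}}$ exceeds, say, $\tfrac{q+1}{2}$; on that range both denominator factors are strictly positive. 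Hence near the peak the inequality $M^1(\mathbb{E}[\widetilde{\beta}_{m}])\ge 1$ is equivalent to $h(m)\ge 0$, where
\[
h(m):=\overline{p}^2(N-m)(q-m-1)-p\,(2m+3-q)(2m+2-q)
\]
is a quadratic in $m$, and $m_{\textrm{peak}}$ lies within distance $1$ of the root $m_\star=m_\star(q)$ of $h$ at which $h$ passes from positive to negative.

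Next I would substitute $m=\tau q-1$ and $N=dq-1$. One computes $N-m=(d-\tau)q$, $q-m-1=(1-\tau)q$, $2m+3-q=(2\tau-1)q+1$, and $2m+2-q=(2\tau-1)q$, so dividing $h(m_\star)=0$ by $q^2$ gives
\[
\overline{p}^2(d-\tau_\star)(1-\tau_\star)=p\,(2\tau_\star-1)^2+\frac{p\,(2\tau_\star-1)}{q},\qquad \tau_\star=\tfrac{m_\star+1}{q}.
\]
Because $m_{\textrm{peak}}$ and $m_\star$ differ by at most $1$, $\tau_{\textrm{peak}}$ and $\tau_\star$ differ by at most $1/q$, so $\tau_\infty=\lim_q\tau_{\textrm{peak}}=\lim_q\tau_\star$ once the latter limit is known to exist. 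Dropping the $O(1/q)$ term, any limit point of $\tau_\star(q)$ in $[\tfrac12,1]$ satisfies $\overline{p}^2(d-\tau)(1-\tau)=p(2\tau-1)^2$, and expanding this is exactly
\[
g(\tau):=\tau^2(\overline{p}^2-4p)+\tau\bigl(4p-\overline{p}^2(d+1)\bigr)+\overline{p}^2d-p=0.
\]

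The step I expect to be the main obstacle is upgrading ``every limit point solves $g=0$'' to genuine convergence to a specific root lying in $[\tfrac12,1)$ — the discrete-to-continuous passage together with the correct root selection. Here I would use $g(1)=-p<0$ and $g(\tfrac12)=\tfrac{\overline{p}^2}{4}(2d-1)>0$ (this is where $d>\tfrac12$ enters), so $g$ has exactly one root in the open interval $(\tfrac12,1)$ (a quadratic has at most two real roots, and a single sign change forces an odd number of them inside). One must also exclude $\tfrac12$ and $1$ as limit points of $\tau_\star(q)$: at $\tau=\tfrac12$ the left side of the displayed root equation tends to $\overline{p}^2(d-\tfrac12)\tfrac12>0$ while the right side tends to $0$, and at $\tau=1$ the left side tends to $0$ while the right side tends to $p>0$. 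Thus the bounded sequence $\tau_\star(q)$ has a unique possible limit point — the root of $g$ in $(\tfrac12,1)$ — hence converges to it, and $\tau_\infty$ equals that root; the quadratic formula then gives the closed form, with the sign of the radical forced by $g(\tfrac12)>0>g(1)$.

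Finally, $\overline{p}^2-4p=0$ precisely when $p^2-6p+1=0$, whose only root in $(0,1)$ is $p=3-\sqrt{8}$. In that case $g$ is linear, and using $\overline{p}^2=4p$ it simplifies to $g(\tau)=-4pd\,\tau+4pd-p$, whose root is $\tau_\infty=\dfrac{4d-1}{4d}$; this lies in $(\tfrac12,1)$ exactly because $d>\tfrac12$, which matches the generic case in the limit $\overline{p}^2-4p\to 0$. This completes the plan.
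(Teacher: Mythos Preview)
Your proof is correct and follows essentially the same approach as the paper: set the first forward quotient equal to $1$, substitute $N+1=dq$ and $m+1=\tau q$, pass to the limit $q\to\infty$ to obtain the quadratic $g(\tau)=0$, check the signs $g(\tfrac12)>0>g(1)$ to isolate the unique root in $[\tfrac12,1)$, and handle the degenerate linear case $\overline p^{2}=4p$ separately. The only cosmetic difference is that you introduce the real root $m_\star$ of $h$ and note $|m_{\textrm{peak}}-m_\star|\le 1$, which treats the discrete-to-continuous passage a bit more carefully than the paper's direct ``set $M^1=1$ at $m_{\textrm{peak}}$'' followed by a Bolzano--Weierstrass argument; the substance is the same.
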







\begin{proof}

As the mode occurs at the value of $m$ that makes the first forward quotient equal to 1, we have:

\begin{align*}
    1= \dfrac{\mathbb{E}[\widetilde{\beta}_{m}]}{\mathbb{E}[\widetilde{\beta}_{m-1}]}=  \dfrac{\overline{p}^2}{p}\dfrac{(N-m_{\textrm{peak}})(q-m_{\textrm{peak}}-1)}{(2m_{\textrm{peak}}+3-q)(2m_{\textrm{peak}}+2-q)}.
\end{align*}

Using the relations $N+1=dq$ and $\tau_{\textrm{peak}}=\dfrac{m_{\textrm{peak}}+1}{q}$, we get:

\begin{align*}
    1&= \dfrac{\overline{p}^2}{p}\dfrac{(dq-(m_{\textrm{peak}}+1))(q-(m_{\textrm{peak}}+1))}{(2(m_{\textrm{peak}}+1)+1-q)(2(m_{\textrm{peak}}+1)-q)} \\
    &= \dfrac{\overline{p}^2}{p}\dfrac{(d-\tau_{\textrm{peak}})(1-\tau_{\textrm{peak}})q}{((2\tau_{\textrm{peak}} -1)q+1)(2\tau_{\textrm{peak}} -1)}. 
\end{align*}

Solving for $q$ here leads to: 

$$q=\dfrac{p(2\tau_{\textrm{peak}}-1)}{\overline{p}^2(d-\tau_{\textrm{peak}})(1-\tau_{\textrm{peak}})-p(2\tau_{\textrm{peak}}-1)^2}.$$

Since we let $N$ tend to infinity, $q$ must also got to infinity by the relation $N+1=dq$, so in the above expression, the denominator must tend to 0 (the case where $\tau=1/2$ is handled separately).

Note that $\tau_{\textrm{peak}}\in\left[\frac{1}{2},1\right]$, which is a compact interval. Thus, for every sequence of $q$ going to infinity, there is a corresponding sequence of $\tau_{\textrm{peak}}$ which must have a convergent subsequence with a limit by the Bolzano-Weierstrass theorem. We will show that setting the above denominator equal to zero results in a quadratic equation in $\tau_{\textrm{peak}}$. We will then show that only one of the roots is a feasible limit value, $\tau_\infty$. If the sequence of $\tau_{\textrm{peak}}$ itself did not converge to this limit, there would be a ball of radius $\varepsilon>0$ around $\tau_\infty$ that a subsequence stayed out of, which contradicts the fact that all subsequences go to the same limit. Thus, $\tau_{\textrm{peak}}$ must converge to the unique feasible root of the quadratic equation, $\tau_\infty$. 

\medskip

Set the denominator equal to zero, and note that it is a quadratic with respect to $\tau_{\infty}$, which we will call $T(\tau_\infty)$: 

$$T(\tau_\infty)\coloneqq\tau_{\infty}^2(\overline{p}^2-4p) + \tau_{\infty}\left(4p-\overline{p}^2(d+1)\right) +\overline{p}^2d-p=0.$$

First, consider the case when this becomes linear in $\tau_{\infty}$, i.e., when $\overline{p}^2=4p$. Solving for $p$, we have $p=3\pm\sqrt8$. Since $p$ is a probability, only consider $p=3-\sqrt8$. Plugging in $\overline{p}^2=4p$, we have: 

\begin{align*}
    \tau_{\infty} &= \dfrac{p-\overline{p}^2d}{4p-\overline{p}^2(d+1)} \\
    &= \dfrac{4d-1}{4d}.
\end{align*}

The smallest value of $d$ is $\frac12$. Using this, we have that $\tau_{\infty}=\frac12$, which is the minimum value of $\tau$. As $d$ grows, $\tau_{\infty}$ will grow towards 1, the upper limit for $\tau$. Therefore, this will yield a unique valid solution for each valid value of $d$. 

\medskip

Now, assume that $\overline{p}^2\neq4p$. Then, using the quadratic formula, we can solve $T(\tau_\infty)=0$ for $\tau_{\infty}$:
\begin{equation*}
    \tau_{\infty}=\dfrac{\overline{p}^2(d+1)-4p\pm\sqrt{\left(4p-\overline{p}^2(d+1)\right)^2-4\left(\overline{p}^2-4p\right)\left(\overline{p}^2d-p\right)}}{2\left(\overline{p}^2-4p\right)}.
\end{equation*}

This gives two values for $\tau_{\infty}$. We know that $\tau\in\left[\frac12,1\right)$. Now check the end points in the $T(\tau_\infty)$ quadratic function to see if either root for $\tau_{\infty}$ is valid. 

\smallskip

In the case $\tau_{\infty}=1$, we have:

\begin{align*}
    T(1)&= \overline{p}^2-4p+4p-\overline{p}^2(d+1)+\overline{p}^2d-p = -p.
\end{align*}

Since $p\in(0,1)$, the original quadratic function must be negative for $\tau_{\infty}=1$, i.e. $T(1)<0$. 

\medskip

In the case $\tau_{\infty}=\frac12$, we have:

\begin{align*}
    T\left(\frac12\right) &= \frac14\left(\overline{p}^2-4p\right) + \frac12\left(4p-\overline{p}^2(d+1)\right) +\overline{p}^2d-p \\
    &= \overline{p}^2\left(d-\frac12\right). 
\end{align*}

Since $d>\frac12$, the original quadratic function must be positive for $\tau_{\infty}=\frac12$, i.e. $T(\frac12)>0$. Thus, exactly one root lies in the range for $\tau_{\infty}$. The appropriate root will depend on the coefficient of the $\tau_{\infty}^2$ in the quadratic equation, which we will call $\alpha \coloneqq \overline{p}^2-4p$, and more specifically, it will depend on the sign of $\alpha$. For our purposes, let $B=\overline{p}^2(d+1)-4p$ and $D=\left(4p-\overline{p}^2(d+1)\right)^2-4\left(\overline{p}^2-4p\right)\left(\overline{p}^2d-p\right)$, so that the two roots are $\tau_{\infty}=\frac{B\pm\sqrt{D}}{2\alpha}$. We are looking for the root such that $\frac12<\tau_\infty<1$.

If $\alpha>0$, then the quadratic function $T(\tau_\infty)$ will be concave up, and since $T(\frac{1}{2})>0$ and $T(1)<0$, then the valid root is the smaller root. In this case, that would be $\tau_{\infty}=\frac{B-\sqrt{D}}{2\alpha}$. 

If $\alpha<0$, then the quadratic function $T(\tau_\infty)$ will be concave down, and since $T(\frac{1}{2})>0$ and $T(1)<0$, then the valid root is the larger root. In this case, since $\alpha<0$, that would also be $\tau_{\infty}=\frac{B-\sqrt{D}}{2\alpha}$. Therefore, we know that the valid root will be: 

\begin{equation*}
    \tau_{\infty}=\dfrac{\overline{p}^2(d+1)-4p-\sqrt{\left(4p-\overline{p}^2(d+1)\right)^2-4\left(\overline{p}^2-4p\right)\left(\overline{p}^2d-p\right)}}{2\left(\overline{p}^2-4p\right)}.
\end{equation*}

\end{proof}

\section{Bounds on Expected Value}
\label{sec: bounds}

In this section, we establish bounds for the logarithm of the expected value of the $m^{\textrm{th}}$ Betti number. 

\begin{corollary}
For constants $d>\frac{1}{2}$ and $\tau$, where $N+1=dq$ and $\tau=\frac{m+1}{q}$, 
$$c_1\leq \frac{\log \mathbb{E}[\widetilde{\beta}_m]}{m} \leq c_2,$$
where $c_1=\left(2-\frac{1}{\tau}\right)\log\left(\frac{\overline{p}}{\tau}\right) + \log d + \left(\frac{1}{\tau}-1\right)\log p+o(1)$ and $c_2=c_1+\frac{1}{\tau}$

\end{corollary}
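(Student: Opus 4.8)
The plan is to derive the bounds directly from the closed form in Lemma~\ref{lemma: exval} by passing to logarithms and estimating the two binomial coefficients by the elementary inequality $\left(\frac nk\right)^k \le \binom nk \le \left(\frac{en}k\right)^k$. The factor of $e$ separating the two sides is precisely what accounts for the width $c_2 - c_1 = \frac1\tau$ of the asserted window.

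First I would record that on the range $\frac{q-1}{2} \le m+1 < q$ where the Betti number is nonzero,
\[
\log\mathbb{E}[\widetilde{\beta}_{m}] = \log\binom{N}{m+1} + \log\binom{m+1}{q-m-2} + (q-m-2)\log p + (2m+3-q)\log\overline{p},
\]
and then substitute $N+1 = dq$ and $m+1 = \tau q$, so that $q - m - 2 = (1-\tau)q - 1$ and $2m+3-q = (2\tau-1)q + 1$. Applying $\left(\frac nk\right)^k \le \binom nk$ to both binomials bounds $\log\mathbb{E}[\widetilde{\beta}_{m}]$ below by
\[
L \coloneqq (m+1)\log\tfrac{N}{m+1} + (q-m-2)\log\tfrac{m+1}{q-m-2} + (q-m-2)\log p + (2m+3-q)\log\overline{p},
\]
and applying $\binom nk \le \left(\frac{en}k\right)^k$ bounds it above by $L$ with each $\log\frac nk$ increased by $1$, i.e.\ by $L + (m+1) + (q-m-2) = L + q - 1$.

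Next I would divide by $m$ and let $q \to \infty$ with $d$ and $\tau$ held fixed (which forces $m \to \infty$). All the ratios in sight are bounded: $\frac{m+1}{m} \to 1$, $\frac{q-m-2}{m} \to \frac1\tau - 1$, $\frac{2m+3-q}{m} \to 2 - \frac1\tau$, $\frac{N}{m+1} \to \frac d\tau$, and $\frac{m+1}{q-m-2} \to \frac{\tau}{1-\tau}$, with all errors $O(1/q) = o(1)$; hence $\frac{L}{m}$ converges to a finite limit and $\frac{q-1}{m} \to \frac1\tau$. Collecting the coefficients of $\log d$, $\log\tau$, $\log p$, $\log\overline{p}$ (together with the $\log(1-\tau)$ contribution from the second binomial) and simplifying yields $\lim_{q\to\infty}\frac Lm = c_1$, so that $c_1 + o(1) \le \frac{\log\mathbb{E}[\widetilde{\beta}_{m}]}{m} \le c_1 + \frac1\tau + o(1)$, which is exactly the claimed pair of bounds with $c_2 = c_1 + \frac1\tau$.

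I do not expect a substantive obstacle here; the work is bookkeeping, and the three points to watch are all routine. First, $m+1 \le N$ forces $\tau < d$, so $\frac{N}{m+1} > 1$ and every binomial coefficient appearing is genuinely of the form $\binom nk$ with $0 \le k \le n$ throughout the limit. Second, the boundary value $\tau = \frac12$ must be checked separately: there $2m+3-q = 1$, so $\binom{m+1}{q-m-2} = m+1$ contributes only $\frac{\log(m+1)}{m} = o(1)$, consistent with the term $\left(2 - \frac1\tau\right)\log\frac{\overline{p}}{\tau}$ vanishing. Third, one must confirm the $o(1)$ errors are uniform, which holds because $\tau$ is a fixed constant in $[\tfrac12, d)$ along the limit and every coefficient and every logarithm converges at rate $O(1/q)$. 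If one wants the upper bound independently, it can also be obtained from $\binom{m+1}{q-m-2}p^{q-m-2}\overline{p}^{2m+3-q} \le (p + \overline{p})^{m+1} = 1$, which gives $\mathbb{E}[\widetilde{\beta}_{m}] \le \binom{N}{m+1}$ and hence $\frac{\log\mathbb{E}[\widetilde{\beta}_{m}]}{m} \le \frac{m+1}{m}\log\frac{eN}{m+1} \to \log\frac d\tau + 1$.
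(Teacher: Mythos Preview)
Your approach is essentially the paper's: both start from Lemma~\ref{lemma: exval}, apply the elementary bounds $(n/k)^k\le\binom{n}{k}\le(en/k)^k$ to each binomial coefficient, take logarithms, substitute $N+1=dq$ and $m+1=\tau q$, divide by $m$, and observe that the upper bound exceeds the lower by $(q-1)/m\to 1/\tau$.

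There is one genuine slip in your final step. If you actually carry out the limit of $L/m$ you will not get $c_1$ but rather
\[
\lim_{q\to\infty}\frac{L}{m}\;=\;c_1\;-\;\Bigl(\tfrac{1}{\tau}-1\Bigr)\log(1-\tau),
\]
the extra term coming, exactly as you anticipate, from $\log\frac{m+1}{q-m-2}\to\log\frac{\tau}{1-\tau}$; the $\log(1-\tau)$ piece does not cancel against anything else. The paper obtains the stated $c_1$ because, before taking logarithms, it inserts a further relaxation that your outline omits: it replaces $\bigl(\tfrac{m+1}{q-m-2}\bigr)^{q-m-2}$ by $\bigl(\tfrac{m+1}{q}\bigr)^{q-m-2}$ in the lower estimate, which legitimately weakens the lower bound and kills the $\log(1-\tau)$ contribution. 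So your plan is correct, but the assertion ``simplifying yields $\lim_{q\to\infty} L/m=c_1$'' needs either that extra relaxation step or an adjustment of the constant. (Note also that the same discrepancy shifts your upper bound to $c_2-\bigl(\tfrac1\tau-1\bigr)\log(1-\tau)$, so without the paper's simplification the window you obtain is translated relative to the one stated.)
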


\begin{proof}
Here, the following bounds on binomial coefficients are used: 
$$\left(\frac{n}{k}\right)^{k} \leq \binom{n}{k} \leq \left(\frac{ne}{k}\right)^{k}.$$

Recall from Lemma~\ref{lemma: exval} that $\mathbb{E}[\widetilde{\beta}_m]=\binom{N}{m+1}\binom{m+1}{q-m-2}p^{q-m-2}\;\overline{p}^{2m+3-q}$. For now, let $R=p^{q-m-2}\;\overline{p}^{2m+3-q}$. Therefore, we have that 
\[
    \left(\frac{N}{m+1}\right)^{m+1}\left(\frac{m+1}{q-m-2}\right)^{q-m-2}R  \leq \mathbb{E}[\widetilde{\beta}_m] \leq \left(\frac{Ne}{m+1}\right)^{m+1}\left(\frac{(m+1)e}{q-m-2}\right)^{q-m-2}R,
\]
    
    which after simplification, leads to
    
\[
    N^{m+1} \left(\frac{p}{q}\right)^{q-m-2}\left(\frac{\overline{p}}{m+1}\right)^{2m+3-q} \leq \mathbb{E}[\widetilde{\beta}_m] \leq N^{m+1} \left(\frac{p}{q}\right)^{q-m-2}\left(\frac{\overline{p}}{m+1}\right)^{2m+3-q}e^{q-1}.
\]

Consider the lower bound, $\mathbb{E}[\widetilde{\beta}_m] \geq N^{m+1} \left(\frac{p}{q}\right)^{q-m-2}\left(\frac{\overline{p}}{m+1}\right)^{2m+3-q}$, and take the log of both sides, and note that the upper bound will differ only by $q-1$: 

\[
    \log \mathbb{E}[\widetilde{\beta}_m] \geq (m+1)\log N +  (q-m-2)\log\left(\frac{p}{q}\right) + (2m+3-q)\log\left(\frac{\overline{p}}{m+1}\right).
\]

Using the relation $N+1=dq$, we get:
\begin{align*}
    \log \mathbb{E}[\widetilde{\beta}_m] \geq& (m+1)\log(dq-1) +  (q-m-2)\log\left(\frac{p}{q}\right) + (2m+3-q)\log\left(\frac{\overline{p}}{m+1}\right) \\
    =& (2m+3-q)\log \left(\frac{\overline{p}q}{m+1}\right)+ (m+1)\log(d-1/q) +  (q-m-2)\log p. 
\end{align*}
Using the relation $\tau=\dfrac{m+1}{q}$, we get
\begin{align*}
    \log \mathbb{E}[\widetilde{\beta}_m] \geq & \left(2m+3-\frac{m+1}{\tau}\right)\log \left(\frac{\overline{p}}{\tau}\right) + (m+1)\log\left(d-\frac{\tau}{m+1}\right) \\
    &  + \left(\frac{m+1}{\tau}-m-2\right)\log p .
\end{align*}

Asymptotically, as $q\rightarrow\infty$, this is $m\left( \left(2-\frac{1}{\tau}\right)\log\left(\frac{\overline{p}}{\tau}\right) + \log d + \left(\frac{1}{\tau}-1\right)\log p\right)+o(m)$.

\end{proof}

Using the value for $\tau_\infty$ found in Theorem~\ref{thm: tinf}, we can plot the regions where the constants $c_1$ and $c_2$ change signs for $0\leq p\leq 1$ and $\frac12\leq d\leq 2$. Note that if $c_1<0$ and $c_2<0$, the expectation of the $m_{\textrm{peak}}$th Betti number is going to zero. If both $c_1>0$ and $c_2>0$, then the expectation of the $m_{\textrm{peak}}$th Betti number is growing. However, if $c_1<0$ and $c_2>0$, the bounds don't say much about the expectation of the $m_{\textrm{peak}}$th Betti number. 

\begin{center}
    \includegraphics[scale=.9]{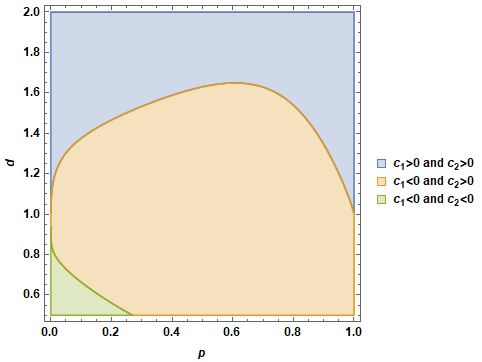}
    \captionof{figure}{Regions where the constants in the bounds for $\log \mathbb{E}[\widetilde{\beta}_{m_{\textrm{peak}}}]/m$ change sign.}
\end{center}

In Figure 1, the horizontal axis is the $p$ axis and the vertical axis is the $d$ axis. From this, it can be seen that there is a small (green) region for small $p$ and $d$ where both constants are negative, which implies that the expected Betti number peak is going to zero. It can also be seen that there is a blue region for larger values of $d$ where both constants are positive, which implies that the peak Betti number is growing.

\end{document}